\def\BState{\State\hskip-\ALG@thistlm}
\def\downbar#1{
\setbox10=\hbox{$#1$}
            \dimen10=\ht10 \advance\dimen10 by 2.5pt
            \ifdim \dimen10<15pt %equals approximately 0.5cm
               \advance\dimen10 by -0.5pt
               \dimen11=\dimen10
               \advance\dimen10 by 2.5pt
               \lower \dimen11
            \else \lower \ht10 \fi
            \hbox {\hskip 1.5pt \vrule height \dimen10 depth \dp10}}
\def\upbar#1{
\setbox10=\hbox{$#1$}
            \dimen10=\ht10 \advance\dimen10 by \dp10 \advance\dimen10 by 2.5pt
            \ifdim \dimen10<15pt %equals approximately 0.5cm
                \advance\dimen10 by 2pt \fi
            \raise 2.5pt \hbox {\hskip -1.5pt \vrule height \dimen10}}
\newtheorem{definition}{\bf Definition}[section]
\newtheorem{theorem}{\bf Theorem}[section]
\newtheorem{proposition}{\bf Proposition}[section]
\newtheorem{corollary}{\bf Corollary}[section]
\newtheorem{remark}{\bf Remark}[section]
\numberwithin{equation}{section}
\begin{document}
\title[Remarks on classical orthogonal polynomials]{Remarks on Askey-Wilson polynomials and Meixner polynomials of the second kind}

%%%%%%%%%%%%%%%%%%%%%%%%%%%%%%%%%%%%%%%%%%%%%%%%%%%%%%%%%%%%%%%%%%%%%%%%%%%%%
\author{K. Castillo}
\address{University of Coimbra, CMUC, Dep. Mathematics, 3001-501 Coimbra, Portugal}
\email{kenier@mat.uc.pt}
\author{D. Mbouna}
\address{University of Coimbra, CMUC, Dep. Mathematics, 3001-501 Coimbra, Portugal}
\email{dmbouna@mat.uc.pt}
\author{J. Petronilho}
\address{University of Coimbra, CMUC, Dep. Mathematics, 3001-501 Coimbra, Portugal}
\email{josep@mat.uc.pt}
%%%%%%%%%%%%%%%%%%%%%%%%%%%%%%%%%%%%%%%%%%%%%%%%%%%%%%%%%%%%%%%%%%%%%%%%%%%%%

\subjclass[2010]{42C05, 33C45}
\date{\today}
\keywords{Askey-Wilson polynomials, Meixner polynomials of the second kind}

\begin{abstract}
The purpose of this note is twofold: firstly to characterize all the sequences of orthogonal polynomials $(P_n)_{n\geq 0}$ such that
$$
\frac{\triangle}{{\bf \triangle} x(s-1/2)}P_{n+1}(x(s-1/2))=c_n(\triangle +2\,\mathrm{I})P_n(x(s-1/2)),
$$
where $\,\mathrm{I}$ is the identity operator, $x$ defines a class of lattices with, generally, nonuniform step-size, and $\triangle f(s)=f(s+1)-f(s)$; and secondly to present, in a friendly way, a method to deal with these kind of problems. 
\end{abstract}
\maketitle

\section{Introduction}\label{introduction}
In 1972, Al-Salam and Chihara proved (see \cite{Al-Salam-1972}) that $(P_n)_{n\geq 0}$ is a $\mathrm{D}$-classical orthogonal polynomial sequence (OPS), namely Hermite, Laguerre, Bessel or Jacobi families, if and only if 
\begin{align}
(az^2+bz+c)\mathrm{D}P_n(z)=(a_nz+b_n)P_n(z)+c_nP_{n-1}(z)\quad (c_n\neq 0)\;, \label{very-classical}
\end{align}
where $\mathrm{D}=d/dz$. If we replace $\mathrm{D}$, in \eqref{very-classical}, by the Hahn operator, depending on two parameters $q$ and $\omega$, we come naturally to the concept of $(q, \omega)$-classical OPS. For $\omega=0$, Datta and Griffin (see \cite{DG2006}) stated that the only solutions of the corresponding equation  \eqref{very-classical} are Al-Salam Carlitz I, little and big $q$-Laguerre, little and big $q$-Jacobi, and $q$-Bessel polynomials. It is worth pointing out that without assuming $\omega=0$, two additional families appear (see \cite{RKDP2020} and references therein). In the same way, we can replace $\mathrm{D}$, in \eqref{very-classical}, by the Askey-Wilson operator. The problem of characterizing such OPS was posed by Ismail (see \cite[Conjecture 24.7.8]{I2005}). The case $a=b=0$ and $c=1$ was considered by Al-Salam (see \cite{A-1995}). Recently, we addressed this problem in its full generality (see \cite{KDPconj}), which leads to a characterization of continuous $q$-Jacobi and some special cases of the Al-Salam-Chihara polynomials. However, the “How” is sometimes more important than the “What”, and the methods presented in \cite{KDPconj} allow us to easily address these issues. Since \cite{KDPconj} is a much more technical work, in this note we will show the reader how to use the ideas developed therein. In order to do this, let us consider the following difference equation
\begin{align}\label{open-problem}
&(az^2+bz+c)\frac{\triangle}{\triangle x(s-1/2)} P_n(x(s-1/2))\\
\nonumber&\qquad=(\triangle +2\,\mathrm{I})(a_nP_{n+1}+b_nP_n+c_nP_{n-1})(x(s-1/2)),
\end{align}
where $\,\mathrm{I}$ is the identity operator, $x$ defines a class of lattices (or grids) with, generally, nonuniform step-size, $\triangle f(s)=f(s+1)-f(s)$, and $\nabla f(s)=\triangle f(s-1)$. Our objective is to present an analog to the Al-Salam theorem \cite{A-1995}; in other words, to characterize for $a=b=0$ and $c=1$ the OPS that satisfy \eqref{open-problem}. The general case is currently being studied, but the calculations involved are too heavy and once again, as in \cite{KDPconj}, the reader would get lost among them without understanding the simplicity of the proposed method. 

The structure of the paper is as follows. Section 2 presents some basic facts of the algebraic theory of OPS on lattices and the Askey-Wilson polynomials and Meixner polynomials of the second kind. Section \ref{preliminaries} contains some preliminary results. In Section \ref{S-main} our main results are stated and proved.  

\section{Background}
Let $\mathcal{P}$ be the vector space of all polynomials with complex coefficients
and let $\mathcal{P}^*$ be its algebraic dual. A simple set in $\mathcal{P}$ is a sequence $(P_n)_{n\geq0}$ such that $\mathrm{deg}(P_n)=n$ for each $n$. A simple set $(P_n)_{n\geq0}$ is called an OPS with respect to ${\bf u}\in\mathcal{P}^*$ if 
$$
\langle{\bf u},P_nP_m\rangle=\kappa_n\delta_{n,m}\quad(m=0,1,\ldots;\;\kappa_n\in\mathbb{C}\setminus\{0\}),
$$
where $\langle{\bf u},f\rangle$ is the action of ${\bf u}$ on $f\in\mathcal{P}$. In this case, we say that ${\bf u}$ is  regular. The left multiplication of a functional ${\bf u}$ by a polynomial $\phi$ is defined by
$$
\left\langle \phi {\bf u}, f  \right\rangle =\left\langle {\bf u},\phi f  \right\rangle \quad (f\in \mathcal{P}).
$$
Consequently, if $(P_n)_{n\geq0}$ is a monic OPS with respect to ${\bf u}\in\mathcal{P}^*$, then the corresponding dual basis is explicitly given by %(see \cite{S1995})
\begin{align}\label{expression-an}
{\bf a}_n =\left\langle {\bf u} , P_n ^2 \right\rangle ^{-1} P_n{\bf u}.
\end{align}
Any functional ${\bf u} \in \mathcal{P}^*$ (when $\mathcal{P}$ is endowed with an appropriate strict inductive limit topology, see \cite{M1991}) can be written in the sense of the weak topology in $\mathcal{P}^*$ as 
\begin{align*}
{\bf u} = \sum_{n=0} ^{\infty} \left\langle {\bf u}, P_n \right\rangle {\bf a}_n.
\end{align*}
It is known (see \cite{C1978}) that a monic OPS, $(P_n)_{n\geq 0}$, is characterized by the following three-term recurrence relation (TTRR):
\begin{align}\label{TTRR_relation}
P_{-1} (z)=0, \quad P_{n+1} (z) =(z-B_n)P_n (z)-C_n P_{n-1} (z) \quad (C_n \neq 0),
\end{align}
and, therefore,
\begin{align}\label{TTRR_coefficients}
B_n = \frac{\left\langle {\bf u} , xP_n ^2 \right\rangle}{\left\langle {\bf u} , P_n ^2 \right\rangle},\quad C_{n+1}  = \frac{\left\langle {\bf u} , P_{n+1} ^2 \right\rangle}{\left\langle {\bf u} , P_n ^2 \right\rangle}.
\end{align}

In our framework, a lattice $x$ is a mapping given by (see \cite{ARS1995})
\begin{equation}
\label{xs-def}
x(s)=\left\{
\begin{array}{lcl}
\mathfrak{c}_1 q^{-s} +\mathfrak{c}_2 q^s +\mathfrak{c}_3,&  q\neq1\\ [7pt]
\mathfrak{c}_4 s^2 + \mathfrak{c}_5 s +\mathfrak{c}_6, &  q =1,
\end{array}
\right.
\end{equation}
where $q>0$ and $\mathfrak{c}_j$ ($1\leq j\leq6$) are complex numbers such that $(\mathfrak{c}_1,\mathfrak{c}_2)\neq(0,0)$ if $q\neq1$.
Note that
$x\big(s+\frac12\big)+x\big(s-\frac12\big)=2\alpha x(s)+2\beta,$
where
\begin{equation}\label{alpha-beta}
\alpha=\frac{q^{1/2}+q^{-1/2}}{2},\quad
\beta=\left\{
\begin{array}{lcl}
(1-\alpha)\mathfrak{c}_3, &  q\neq1,\\ [7pt]
\mathfrak{c}_4/4, &  q =1.
\end{array}
\right.
\end{equation}
Moreover, 
\begin{align*}
\frac{x(s+n)+x(s)}{2}&=\alpha_n x_n (s) +\beta_n , ~
x(s+n)-x(s)=\gamma_n \nabla x_{n+1} (s),
\end{align*}
where $x_{\mu}(s)=x(s+\mu/2)$ and $(\alpha_n)_{n\geq0}$, $(\beta_n)_{n\geq0}$, and $(\gamma_n)_{n\geq0}$ are given by
\begin{align*}
2\alpha_n&=q^{n/2} +q^{-n/2},\\[7pt]
\beta_n&= \left\{
\begin{array}{lcl}
\displaystyle (1-\alpha_n)\mathfrak{c}_3, & q\neq1\\[7pt]
\beta\,n^2, & q=1,
\end{array}
\right.  \quad \gamma_n = \left\{
\begin{array}{lcl}
\displaystyle\frac{q^{n/2}-q^{-n/2}}{q^{1/2}-q^{-1/2}}, & q\neq1 \\ [7pt]
n, & q=1.
\end{array}
\right. 
\end{align*}
One may easily check that
\begin{align}\label{form-4}
\alpha +\alpha_n\gamma_n=\alpha_{n-1}\gamma_{n+1}.  
\end{align}
Define two operators $\mathrm{D}_x$ and $\mathrm{S}_x$ on $\mathcal{P}$ by 
\begin{align*}
\mathrm{D}_x f(x(s))=\frac{\triangle}{{ \triangle} x(s-1/2)}f(x(s-1/2)),\quad \mathrm{S}_x f(x(s))= \frac{1}{2}(\triangle+2\,\mathrm{I})f(x(s-1/2)),
\end{align*}
These operators  induce two elements on $\mathcal{P}^*$, say $\mathbf{D}_x$ and $\mathbf{S}_x$, via the following definition (see \cite{FK-NM2011}): 
\begin{align*}
\langle \mathbf{D}_x{\bf u},f\rangle=-\langle {\bf u},\mathrm{D}_x f\rangle,\quad \langle\mathbf{S}_x{\bf u},f\rangle=\langle {\bf u},\mathrm{S}_x f\rangle.
\end{align*}

The monic Askey-Wilson polynomial, $(Q_n(\cdot; a_1, a_2, a_3, a_4 | q))_{n\geq 0}$, satisfy \eqref{TTRR_relation} (see \cite[(14.1.5)]{KLS2010}) with
\begin{align*}
B_n &= a_1+\frac{1}{a_1}-\frac{(1-a_1a_2q^n)(1-a_1a_3q^n)(1-a_1a_4q^n)(1-a_1a_2a_3a_4q^{n-1})}{a_1(1-a_1a_2a_3a_4q^{2n-1})(1-a_1a_2a_3a_4q^{2n})}\\[7pt]
&\quad-\frac{a_1(1-q^n)(1-a_2a_3q^{n-1})(1-a_2a_4q^{n-1})(1-a_3a_4q^{n-1})}{(1-a_1a_2a_3a_4q^{2n-1})(1-a_1a_2a_3a_4q^{2n-2})},\\[7pt]
C_{n+1}&=(1-q^{n+1})(1-a_1a_2a_3a_4q^{n-1}) \\[7pt]
&\quad\times \frac{(1-a_1a_2q^n)(1-a_1a_3q^n)(1-a_1a_4q^n)(1-a_2a_3q^n)(1-a_2a_4q^n)(1-a_3a_4q^n)}{4(1-a_1a_2a_3a_4q^{2n-1})(1-a_1a_2a_3a_4q^{2n})^2 (1-a_1a_2a_3a_4q^{2n+1})}
\end{align*}
and subject to the following restrictions (see \cite{KDP2021}):
$$
\begin{array}l
(1-a_1a_2a_3a_4q^n)(1-a_1a_2q^n)(1-a_1a_3q^n) \\[7pt]
\qquad\quad\times(1-a_1a_4q^n)(1-a_2a_3q^n)(1-a_2a_4q^n)(1-a_3a_4q^n) \neq 0.
\end{array}
$$
The monic Meixner polynomials of the second kind, $(M_n(\cdot;b_1,b_2))_{n\geq 0}$, are defined by (see \cite[p.179, (3.17)]{C1978})
\begin{align}
zM_n(z;b_1,b_2)&=M_{n+1}(z;b_1,b_2)-b_1(2n+b_2) M_n(z;b_1,b_2) \label{Meixner2nd} \\ 
&\quad +(b_1^2+1)n(n+b_2-1)M_{n-1}(z;b_1,b_2),\quad M_{-1}(z;b_1,b_2)=0, \nonumber
\end{align}
where $b_1$ and $b_2$ are parameters so that $b_1^2\neq-1$ and $b_2\neq0,-1,-2,\ldots$.

\section{Preliminary results}\label{preliminaries}
 Let $f,g\in\mathcal{P}$ and ${\bf u}\in\mathcal{P}^*$. Then the following properties hold (see e.g. \cite{KDP2021} and references therein):
\begin{align}
\mathrm{D}_x \big(fg\big)&= \big(\mathrm{D}_x f\big)\big(\mathrm{S}_x g\big)+\big(\mathrm{S}_x f\big)\big(\mathrm{D}_x g\big), \label{def-Dx-fg} \\[7pt]
\mathrm{S}_x \big( fg\big)&=\big(\mathrm{D}_x f\big) \big(\mathrm{D}_x g\big)\texttt{U}_2  +\big(\mathrm{S}_x f\big) \big(\mathrm{S}_x g\big), \label{def-Sx-fg} \\[7pt]
f{\bf D}_x {\bf u}&={\bf D}_x\left(S_xf~{\bf u}  \right)-{\bf S}_x\left(D_xf~{\bf u}  \right), \label{def-fD_x-u}\\[7pt]
\alpha \mathbf{D}_x ^n \mathbf{S}_x {\bf u}&= \alpha_{n+1} \mathbf{S}_x \mathbf{D}_x^n {\bf u}
+\gamma_n \texttt{U}_1\mathbf{D}_x^{n+1}{\bf u}, \label{DxnSx-u} 
\end{align}
where
\begin{align*}%\label{U1-U2-simples}
\texttt{U}_1 (z)&=\left\{
\begin{array}{lcl}
(\alpha^2-1)\big(z-\mathfrak{c}_3\big), & q\neq1\\[7pt]
2\beta, &q=1,
\end{array}
\right. \\[7pt]
\texttt{U}_2(z)&=\left\{
\begin{array}{lcl}
(\alpha^2-1)\big((z-\mathfrak{c}_3)^2-4\mathfrak{c}_1\mathfrak{c}_2\big), & q\neq1\\[7pt]
4\beta (z-\mathfrak{c}_6)+\mathfrak{c}_5^2/4, & q=1.
\end{array}
\right.
\end{align*}
It is known that if $\;x(s)=\mathfrak{c}_1q^{-s}+\mathfrak{c}_2q^s+\mathfrak{c}_3$, then
\begin{align}
\mathrm{D}_x z^n =\gamma_n z^{n-1}+u_nz^{n-2}+\cdots,\quad \mathrm{S}_x z^n =\alpha_n z^n+\widehat{u}_nz^{n-1}+\cdots, \label{Dx-xnSx-xn}
\end{align}
where $u_n =\big(n\gamma_{n-1}-(n-1)\gamma_n\big)\mathfrak{c}_3$ and $\widehat{u}_n =n(\alpha_{n-1}-\alpha_n)\mathfrak{c}_3$. For quadratic lattices we present the following proposition.
\begin{proposition}
Consider the quadratic lattice $x(s)=4\beta s^2 +\mathfrak{c}_5s+\mathfrak{c}_6$. Then 
\begin{align}
\mathrm{D}_x z^n =n z^{n-1}+v_nz^{n-2}+\cdots,\quad 
\mathrm{S}_x z^n =z^n+\widehat{v}_nz^{n-1}+\cdots\; \quad (n=1,2,\ldots),\label{DxSx-xn-quadratic}
\end{align}
where $v_n=\beta n(n-1)(2n-1)/3$ and $\widehat{v}_n=\beta n(2n-1)$. 
\end{proposition}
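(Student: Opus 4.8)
The plan is to induct on $n$ via the product rules \eqref{def-Dx-fg} and \eqref{def-Sx-fg}, taking $f=z$ and $g=z^{n}$. First I would settle the base case $n=1$: since the increment $x(s+1/2)-x(s-1/2)$ cancels against the denominator in the definition of $\mathrm{D}_x$, one has $\mathrm{D}_x z=1$, while $\mathrm{S}_x z=\tfrac12\big(x(s+1/2)+x(s-1/2)\big)=\alpha z+\beta=z+\beta$, using $\alpha=1$ for $q=1$; these agree with \eqref{DxSx-xn-quadratic} since $v_1=0$ and $\widehat v_1=\beta$. Feeding $\mathrm{D}_x z=1$ and $\mathrm{S}_x z=z+\beta$ into \eqref{def-Dx-fg} and \eqref{def-Sx-fg} produces the operator recurrences $\mathrm{D}_x z^{n+1}=(z+\beta)\mathrm{D}_x z^{n}+\mathrm{S}_x z^{n}$ and $\mathrm{S}_x z^{n+1}=(z+\beta)\mathrm{S}_x z^{n}+\texttt{U}_2\,\mathrm{D}_x z^{n}$, where $\texttt{U}_2(z)=4\beta(z-\mathfrak{c}_6)+\mathfrak{c}_5^{2}/4$ is the $q=1$ instance from the list above.

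Next I would compare the two highest-degree coefficients on each side of these recurrences. Writing $\mathrm{D}_x z^{n}=d_n z^{n-1}+v_n z^{n-2}+\cdots$ and $\mathrm{S}_x z^{n}=s_n z^{n}+\widehat v_n z^{n-1}+\cdots$, the leading coefficients satisfy $d_{n+1}=d_n+s_n$ and $s_{n+1}=s_n$; with $d_1=s_1=1$ this forces $s_n=1$ and $d_n=n$, which are precisely the leading terms asserted in \eqref{DxSx-xn-quadratic}. Reading off the subdominant coefficients—here $\mathrm{S}_x z^{n}$ contributes the $z^{n-1}$ term of $\mathrm{D}_x z^{n+1}$, and the factor $4\beta z$ of $\texttt{U}_2$ lifts the $n z^{n-1}$ term of $\mathrm{D}_x z^{n}$ into the $z^{n}$ term of $\mathrm{S}_x z^{n+1}$—yields the coupled linear recurrences $v_{n+1}=v_n+\widehat v_n+\beta n$ and $\widehat v_{n+1}=\widehat v_n+4\beta n+\beta$, subject to $v_1=0$ and $\widehat v_1=\beta$.

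Finally I would solve these. The second recurrence is self-contained: telescoping $\widehat v_{n+1}-\widehat v_n=\beta(4n+1)$ from $\widehat v_1=\beta$ gives $\widehat v_n=\beta\big(1+(n-1)(2n+1)\big)=\beta n(2n-1)$. Substituting this into the first recurrence collapses it to $v_{n+1}-v_n=\widehat v_n+\beta n=2\beta n^{2}$, and telescoping from $v_1=0$ gives $v_n=2\beta\sum_{k=1}^{n-1}k^{2}=\beta n(n-1)(2n-1)/3$, exactly the claimed formulas. No step presents a genuine obstacle; the only care required is to carry the two highest-degree coefficients simultaneously through the recurrences and to use the lattice specialization $\alpha=1$, $\mathfrak{c}_4=4\beta$ valid at $q=1$.
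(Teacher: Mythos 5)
Your proof is correct and takes essentially the same route as the paper's: induction on $n$ via the product rules \eqref{def-Dx-fg} and \eqref{def-Sx-fg} applied to $z\cdot z^n$, tracking the two highest-degree coefficients, with the same base case $\mathrm{D}_x z=1$, $\mathrm{S}_x z=z+\beta$. The only cosmetic difference is directional: you derive the coefficient recurrences $v_{n+1}=v_n+\widehat v_n+\beta n$ and $\widehat v_{n+1}=\widehat v_n+4\beta n+\beta$ and then solve them by telescoping, whereas the paper assumes the stated closed forms inductively and verifies that they satisfy exactly these recurrences.
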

\begin{proof}
For $n=1$, we have $\mathrm{D}_x z =1$ and $\mathrm{S}_x z =z+\beta$, and so $v_1=0$ and $\widehat{v}_1 =\beta$. Then (\ref{DxSx-xn-quadratic}) is true for $n=1$. Now suppose that (\ref{DxSx-xn-quadratic}) is true for all integers less than or equal to a fixed $n$. Using this together with (\ref{def-Dx-fg}) and (\ref{def-Sx-fg}), we have
\begin{align*}
\mathrm{D}_x z^{n+1} &=\mathrm{D}_x (zz^n)=\mathrm{D}_x z^n ~ \mathrm{S}_x z + \mathrm{S}_x z^n ~\mathrm{D}_x z=(z+\beta)\mathrm{D}_x z^n + \mathrm{S}_x z^n \\[7pt]
&=(n+1)z^n +(v_n +\widehat{v}_n  +\beta n)z^{n-1}+\cdots= (n+1)z^n +v_{n+1}z^{n-1}+\cdots.
\end{align*}
In a similar way we also have
\begin{align*}
\mathrm{S}_x z^{n+1} &=\mathrm{S}_x (zz^n)=\texttt{U}_2 (z)\mathrm{D}_x z~\mathrm{D}_x z^n  + \mathrm{S}_x z^n ~\mathrm{S}_x z=\texttt{U}_2 (z)\mathrm{D}_x z^n + ( z+\beta)\mathrm{S}_x z^n\\[7pt]
&=z^{n+1} +(4\beta n+\beta +\widehat{v}_n)z^n +\cdots=z^{n+1}+\widehat{v}_{n+1}z^n +\cdots,
\end{align*}
and the result follows.
\end{proof}

\begin{definition}\cite{FK-NM2011, KDP2021}
Let $x$ be a lattice given by \eqref{xs-def}. 
${\bf u}\in\mathcal{P}^*$ is called $x$-classical if it is regular and there exist polynomials $\phi$ and $\psi$ with $deg( \phi)\leq 2$ and $deg( \psi)=1$ such that
\begin{equation}\label{NUL-Pearson}
\mathbf{D}_x(\phi{\bf u})=\mathbf{S}_x(\psi{\bf u}).
\end{equation}
An OPS with respect to a $x$-classical functional is called a $x$-classical OPS.
\end{definition}

\begin{theorem}\label{main-Thm1}\cite{KDP2021}
Let $(P_n)_{n\geq 0}$ be a monic OPS with respect to ${\bf u} \in \mathcal{P}^*$. 
Suppose that ${\bf u}$ satisfies \eqref{NUL-Pearson} where $\phi(z)=az^2+bz+c$ and $\psi(z)=dz+e$, with $d\neq0$.
Then $(P_n)_{n\geq 0}$ satisfies \eqref{TTRR_relation} with
\begin{align}
B_n  =\mathfrak{c}_3+ \frac{\gamma_n e_{n-1}}{d_{2n-2}}
-\frac{\gamma_{n+1}e_n}{d_{2n}},\quad
C_{n+1}  =-\frac{\gamma_{n+1}d_{n-1}}{d_{2n-1}d_{2n+1}}\phi^{[n]}\left(\mathfrak{c}_3 -\frac{e_{n}}{d_{2n}}\right),\label{Bn-Cn-Dx}
\end{align}
 where $d_n=a\gamma_n+d\alpha_n$, $e_n=\phi'(\mathfrak{c}_3)\gamma_n+\psi(\mathfrak{c}_3)\alpha_n$, and \begin{align*}
\phi^{[n]}(z)&=\big(d(\alpha^2-1)\gamma_{2n}+a\alpha_{2n}\big)
\big((z-\mathfrak{c}_3)^2-2\mathfrak{c}_1\mathfrak{c}_2\big)\\[7pt]
&\quad +\big(\phi'(\mathfrak{c}_3)\alpha_n+\psi(\mathfrak{c}_3)(\alpha^2-1)\gamma_n\big)(z-\mathfrak{c}_3)
+ \phi(\mathfrak{c}_3)+2a\mathfrak{c}_1\mathfrak{c}_2,
\end{align*}
if $x(s)=\mathfrak{c}_1 q^{-s} +\mathfrak{c}_2 q^s +\mathfrak{c}_3$, or else
\begin{align}\label{Bn-Cn-quadratic}
B_n = \frac{ne_{n-1}}{d_{2n-2}} -\frac{(n+1)e_n}{d_{2n}} -2\beta n(n-1),\; \;  
C_{n+1} =-\frac{(n+1)d_{n-1}}{d_{2n-1}d_{2n+1}}\phi ^{[n]}\left(-\beta n^2 -\frac{e_n}{d_{2n}}  \right),
\end{align} 
where $d_n =an+d$, $e_n=bn+e+2\beta dn^2$, and 
$$\phi ^{[n]}(z)=az^2 +(b+6\beta nd_n)z+ \phi(\beta n^2)+2\beta n\psi(\beta n^2)-\frac{n}{4}\left( 16\beta \mathfrak{c}_6 -\mathfrak{c}_5 ^2\right)d_n
$$ 
otherwise.
\end{theorem}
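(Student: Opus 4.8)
The plan is to read off $B_n$ and $C_{n+1}$ from their representations \eqref{TTRR_coefficients} by converting the distributional Pearson equation \eqref{NUL-Pearson} into a single usable scalar identity. First I would pass to the weak form of \eqref{NUL-Pearson}: pairing both sides with an arbitrary $f\in\mathcal P$ and unfolding the definitions of $\mathbf D_x$ and $\mathbf S_x$ gives
\begin{equation*}
\langle{\bf u},\phi\,\mathrm{D}_xf+\psi\,\mathrm{S}_xf\rangle=0\qquad(f\in\mathcal P),
\end{equation*}
which is the only consequence of \eqref{NUL-Pearson} I will use. Writing $P_n(z)=z^n+p_nz^{n-1}+\cdots$, the recurrence \eqref{TTRR_relation} yields $B_n=p_n-p_{n+1}$ together with $C_{n+1}=\langle{\bf u},P_{n+1}^2\rangle/\langle{\bf u},P_n^2\rangle$, so it suffices to control the subleading coefficients $p_n$ and the norm ratios, the divisions in \eqref{Bn-Cn-Dx}--\eqref{Bn-Cn-quadratic} being legitimate since regularity and $d\neq0$ keep the relevant $d_m$ from vanishing.

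To obtain $B_n$ I would feed the weak identity the test polynomials $P_n$ and the products $P_nP_{n\pm1}$, expanding every $\mathrm{D}_x$ and $\mathrm{S}_x$ of a product by the twisted Leibniz rules \eqref{def-Dx-fg} and \eqref{def-Sx-fg}; orthogonality collapses most terms and the survivors are evaluated through the explicit data \eqref{Dx-xnSx-xn}, namely $\mathrm{D}_xz^n=\gamma_nz^{n-1}+u_nz^{n-2}+\cdots$ and $\mathrm{S}_xz^n=\alpha_nz^n+\widehat u_nz^{n-1}+\cdots$ with $u_n,\widehat u_n$ proportional to $\mathfrak c_3$. At the top degree this identifies $d_n=a\gamma_n+d\alpha_n$ as the leading coefficient of $\phi\,\mathrm{D}_xP_n+\psi\,\mathrm{S}_xP_n$, and the next coefficient produces $e_n=\phi'(\mathfrak c_3)\gamma_n+\psi(\mathfrak c_3)\alpha_n$; the centring constant $\mathfrak c_3$ enters $B_n$ exactly through $u_n$ and $\widehat u_n$, which is why it survives as a bare additive term. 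Solving the resulting two-step recursion for $p_n$ and forming $p_n-p_{n+1}$ then telescopes into the stated $B_n$ of \eqref{Bn-Cn-Dx}.

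For $C_{n+1}$ I would instead apply the weak identity to $f=P_n^2$. The product rules turn it into a relation among $\langle{\bf u},\psi(\mathrm{S}_xP_n)^2\rangle$, $\langle{\bf u},\psi\,\texttt{U}_2(\mathrm{D}_xP_n)^2\rangle$, and the cross term $\langle{\bf u},\phi\,\mathrm{D}_xP_n\,\mathrm{S}_xP_n\rangle$, and since $\mathrm{S}_xP_n=\alpha_nP_n+\cdots$ and $\mathrm{D}_xP_n=\gamma_nP_{n-1}+\cdots$ this couples the three consecutive norms $\langle{\bf u},P_{n+1}^2\rangle,\langle{\bf u},P_n^2\rangle,\langle{\bf u},P_{n-1}^2\rangle$. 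The essential feature is the quadratic factor $\texttt{U}_2$ in \eqref{def-Sx-fg}: substituting $\texttt{U}_2(z)=(\alpha^2-1)\big((z-\mathfrak c_3)^2-4\mathfrak c_1\mathfrak c_2\big)$ and collecting is precisely what manufactures the polynomial $\phi^{[n]}$ together with its $\mathfrak c_1\mathfrak c_2$ and $(\alpha^2-1)$ contributions, and dividing the resulting norm recursion yields $C_{n+1}$. The quadratic-lattice case is handled by the same steps, replacing \eqref{Dx-xnSx-xn} by \eqref{DxSx-xn-quadratic} (so that $\gamma_n,\alpha_n$ become $n,1$ and $u_n,\widehat u_n$ become $v_n,\widehat v_n$), which gives \eqref{Bn-Cn-quadratic}.

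I expect the main obstacle to be computational rather than conceptual: carrying the subleading bookkeeping through the twisted Leibniz rules without error, and in particular showing that the $\texttt{U}_2$-generated quadratic terms reorganize into the compact forms of $\phi^{[n]}$ and of $C_{n+1}$. These simplifications hinge on the lattice identities for $\alpha_n,\beta_n,\gamma_n$, most notably \eqref{form-4}, and it is the repeated use of such identities to telescope the recursions for $p_n$ and for the norm ratios that forms the technical heart of the argument.
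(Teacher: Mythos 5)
This theorem is not proved in the note at all: it is quoted from \cite{KDP2021}, and the method of that paper is the dual-basis/functional calculus which the note itself illustrates when proving \eqref{x-classical-with-n} (via \eqref{expression-an}, \eqref{basis-Dx-derivatives}, \eqref{DxnSx-u}, \eqref{def-fD_x-u}). Measured against that, your proposal starts correctly: \eqref{NUL-Pearson} is indeed equivalent to $\langle{\bf u},\phi\,\mathrm{D}_xf+\psi\,\mathrm{S}_xf\rangle=0$ for all $f\in\mathcal{P}$, $B_n=p_n-p_{n+1}$ holds for the subleading coefficients, and $d_n=a\gamma_n+d\alpha_n$ is the leading coefficient of $\phi\,\mathrm{D}_xP_n+\psi\,\mathrm{S}_xP_n$. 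But the core mechanism you propose does not work, and the obstacle is conceptual, not computational.

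The gap is the claim that pairing the weak identity with the test functions $P_n$, $P_nP_{n\pm1}$, $P_n^2$ ``collapses by orthogonality'' into a recursion for the $p_n$ and a recursion for three consecutive norms. Note first that the hypothesis here is an identity in $\mathcal{P}^*$, not a polynomial identity (contrast with \eqref{equat-solving}, where the note's own coefficient-comparison argument is legitimate); so each test function buys you exactly one scalar equation. Now take your key case $f=P_n^2$, which gives $\langle{\bf u},\,2\phi\,\mathrm{D}_xP_n\,\mathrm{S}_xP_n+\psi\,\texttt{U}_2(\mathrm{D}_xP_n)^2+\psi\,(\mathrm{S}_xP_n)^2\rangle=0$. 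To evaluate these brackets you must expand $\mathrm{S}_xP_n=\sum_{j\le n}s_{n,j}P_j$ and $\mathrm{D}_xP_n=\sum_{j\le n-1}t_{n,j}P_j$ in full; since $\langle{\bf u},\psi P_jP_k\rangle\neq0$ whenever $|j-k|\le1$, and $\langle{\bf u},\phi P_jP_k\rangle$, $\langle{\bf u},\psi\,\texttt{U}_2P_jP_k\rangle$ can be nonzero for $|j-k|\le2$ and $|j-k|\le3$ respectively, \emph{every} unknown coefficient $s_{n,j},t_{n,j}$ and every norm $\langle{\bf u},P_j^2\rangle$, $j\le n$, enters the single equation. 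Orthogonality removes nothing else, the same happens for $f=P_nP_{n\pm1}$, and so your system never closes: unknowns proliferate as fast as equations. What is missing is precisely the substance of the proof in \cite{KDP2021}: one must first establish structure relations --- that $(P_n^{[1]})_{n\ge0}$ is again a monic OPS with respect to an explicit functional ${\bf u}^{[1]}$, itself $x$-classical with a computable pair $(\phi^{[1]},\psi^{[1]})$, and then iterate. The polynomial $\phi^{[n]}$ in the statement is exactly this $n$-fold iterated Pearson coefficient (observe that $\phi^{[0]}=\phi$), not a by-product of collecting $\texttt{U}_2$-terms from $f=P_n^2$; only once this truncation is available do expansions such as $\mathrm{S}_xP_n=\alpha_nP_n+\cdots$ involve finitely many accessible terms and lead to \eqref{Bn-Cn-Dx} and \eqref{Bn-Cn-quadratic}. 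A lesser inaccuracy of the same kind: $e_n=\phi'(\mathfrak{c}_3)\gamma_n+\psi(\mathfrak{c}_3)\alpha_n$ is \emph{not} the next-to-leading coefficient of $\phi\,\mathrm{D}_xP_n+\psi\,\mathrm{S}_xP_n$; that coefficient is $au_n+d\widehat{u}_n+b\gamma_n+e\alpha_n+d_{n-1}p_n$, which differs from $e_n$ already at $n=1$.
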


\begin{remark}
Under the hypothesis of Theorem \ref{main-Thm1}, it was also proved in \cite{KDP2021} that the conditions $d_n\neq0$ and 
\begin{align}\label{Reg1Rmk}
\phi^{[n]}\left(\mathfrak{c}_3 -\frac{e_{n}}{d_{2n}}\right)\neq0
\end{align}
if $x(s)=\mathfrak{c}_1 q^{-s} +\mathfrak{c}_2 q^s +\mathfrak{c}_3$, or else $d_n\neq0$ and
\begin{align}\label{Reg2Rmk}
\phi ^{[n]}\left(-\beta n^2 -\frac{e_n}{d_{2n}}\right)\neq0
\end{align}
otherwise, hold. Moreover, these are necessary and sufficient conditions for the regularity of a nonzero functional ${\bf u}\in\mathcal{P}^*$ fulfilling \eqref{NUL-Pearson}.
\end{remark}

Theorem \ref{main-Thm1} will play a crucial role along this work. We denote by $P_n ^{[k]}$ $(k=0,1,\ldots)$ the monic polynomial of degree $n$ defined by
\begin{align*}
P_n ^{[k]} (z)=\frac{\mathrm{D}_x ^k P_{n+k} (z)}{ \prod_{j=1} ^k \gamma_{n+j}} =\frac{\gamma_{n} !}{\gamma_{n+k} !} \mathrm{D}_x ^k P_{n+k} (z). %\label{Pnkx}
\end{align*}
Here it is understood that $\mathrm{D}_x ^0 f=f $, empty product equals one, and $\gamma_0 !=1$, $\gamma_{n+1}!=\gamma_1\cdots \gamma_n \gamma_{n+1}$. If $({\bf a}^{[k]} _n)_{n\geq 0}$ is the dual basis associated to the sequence $(P_n ^{[k]})_{n\geq 0}$, we leave it to the reader to verify that
\begin{align}
{\bf D}_x ^k {\bf a}^{[k]} _n=(-1)^k \frac{\gamma_{n+k}!}{\gamma_n ! }{\bf a}_{n+k}\quad (k=0,1,\ldots). \label{basis-Dx-derivatives}
\end{align}

\begin{proposition}
Let $(P_n)_{n\geq 0}$ be a monic OPS with respect to ${\bf u}\in \mathcal{P}^*$. Assume that 
\begin{align}
\mathrm{D}_xP_{n+1}(z)= k_n \mathrm{S}_xP_n(z)\quad (n=0,1,\ldots;\;k_n\in \mathbb{C}).\label{equation-to-solve}
\end{align}
Then $k_n=\alpha_n^{-1} \gamma_{n+1}$ and
\begin{align}
{\bf D}_x ( ( \gamma_{n+1} \mbox{\rm $\texttt{U}_1$}  P_{n+1}+\alpha_nC_{n+1}P_n){\bf u})=-\alpha \gamma_{n+1} {\bf S}_x (P_{n+1}{\bf u} ). \label{x-classical-with-n}
\end{align}
\end{proposition}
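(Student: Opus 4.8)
The plan is to reduce everything to the dual-basis identity \eqref{basis-Dx-derivatives} together with the operational rules \eqref{def-fD_x-u} and \eqref{DxnSx-u}. Throughout write $h_n:=\langle{\bf u},P_n^2\rangle$, so that ${\bf a}_n=h_n^{-1}P_n{\bf u}$ by \eqref{expression-an} and $C_{n+1}=h_{n+1}/h_n$ by \eqref{TTRR_coefficients}.

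First I would pin down $k_n$ by comparing leading coefficients. Since $P_{n+1}$ and $P_n$ are monic, \eqref{Dx-xnSx-xn} (resp. \eqref{DxSx-xn-quadratic} for $q=1$) gives $\mathrm{D}_xP_{n+1}=\gamma_{n+1}z^n+\cdots$ and $\mathrm{S}_xP_n=\alpha_nz^n+\cdots$. Reading off the coefficient of $z^n$ in \eqref{equation-to-solve} forces $\gamma_{n+1}=k_n\alpha_n$, i.e. $k_n=\alpha_n^{-1}\gamma_{n+1}$ (here $\alpha_n\neq0$ since $q>0$). Consequently the monic polynomial $P_n^{[1]}=\gamma_{n+1}^{-1}\mathrm{D}_xP_{n+1}$ equals $\alpha_n^{-1}\mathrm{S}_xP_n$.

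The key step is to identify the dual basis $({\bf a}_n^{[1]})$. For every $k$,
$$\langle{\bf S}_x{\bf a}_m^{[1]},P_k\rangle=\langle{\bf a}_m^{[1]},\mathrm{S}_xP_k\rangle=\alpha_k\langle{\bf a}_m^{[1]},P_k^{[1]}\rangle=\alpha_k\delta_{m,k}=\langle\alpha_m{\bf a}_m,P_k\rangle,$$
so in the weak topology ${\bf S}_x{\bf a}_n^{[1]}=\alpha_n{\bf a}_n=\alpha_nh_n^{-1}P_n{\bf u}$, while \eqref{basis-Dx-derivatives} with $k=1$ gives ${\bf D}_x{\bf a}_n^{[1]}=-\gamma_{n+1}{\bf a}_{n+1}=-\gamma_{n+1}h_{n+1}^{-1}P_{n+1}{\bf u}$. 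Writing ${\bf w}_j:=P_j{\bf u}$, I would now feed ${\bf a}_n^{[1]}$ into \eqref{DxnSx-u} taken with index $1$, i.e. $\alpha\,{\bf D}_x{\bf S}_x{\bf a}_n^{[1]}=\alpha_2{\bf S}_x{\bf D}_x{\bf a}_n^{[1]}+\gamma_1\texttt{U}_1{\bf D}_x^2{\bf a}_n^{[1]}$, and apply ${\bf D}_x,{\bf S}_x$ to the two relations just found. This yields a single identity relating ${\bf D}_x{\bf w}_n$, ${\bf S}_x{\bf w}_{n+1}$ and $\texttt{U}_1{\bf D}_x{\bf w}_{n+1}$.

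To bring this to the stated form I would rewrite the term $\texttt{U}_1{\bf D}_x{\bf w}_{n+1}$ via \eqref{def-fD_x-u} with $f=\texttt{U}_1$. A short computation from the definition of $\texttt{U}_1$ together with \eqref{Dx-xnSx-xn} gives the uniform identities $\mathrm{S}_x\texttt{U}_1=\alpha\,\texttt{U}_1$ and $\mathrm{D}_x\texttt{U}_1=\alpha^2-1$ (these also hold for $q=1$, where $\alpha=1$ and $\texttt{U}_1=2\beta$), whence $\texttt{U}_1{\bf D}_x{\bf w}_{n+1}=\alpha\,{\bf D}_x(\texttt{U}_1{\bf w}_{n+1})-(\alpha^2-1){\bf S}_x{\bf w}_{n+1}$. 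Substituting and using $\alpha_2=2\alpha^2-1$ collapses the coefficient of ${\bf S}_x{\bf w}_{n+1}$ to $\alpha^2$; dividing by $\alpha$, multiplying by $\gamma_{n+1}$, and replacing $h_{n+1}/h_n$ by $C_{n+1}$ then yields exactly \eqref{x-classical-with-n}. I expect the main obstacle to lie in this final stretch: correctly tracking the normalizations $h_n$ and the constants $\alpha_n,\gamma_n,\alpha_2$ through the substitution, securing the $\alpha^2$ cancellation, and verifying that the auxiliary $\texttt{U}_1$-identities—and hence the whole argument—hold simultaneously on the $q\neq1$ and $q=1$ lattices.
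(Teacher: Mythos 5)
Your plan is correct and follows essentially the same route as the paper's proof: leading-coefficient comparison for $k_n$, the dual-basis identity ${\bf S}_x{\bf a}_n^{[1]}=\alpha_n{\bf a}_n$, an application of \eqref{DxnSx-u} with index $1$ together with \eqref{basis-Dx-derivatives} for $k=1$, and the rewriting of the $\texttt{U}_1{\bf D}_x$ term via \eqref{def-fD_x-u} using $\mathrm{S}_x\texttt{U}_1=\alpha\texttt{U}_1$, $\mathrm{D}_x\texttt{U}_1=\alpha^2-1$. The only (cosmetic) difference is that you carry the functionals $P_j{\bf u}$ instead of the normalized dual-basis elements ${\bf a}_j$, which the paper converts only at the final step via \eqref{expression-an} and \eqref{TTRR_coefficients}.
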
  

\begin{proof} 
The expression for $k_n$ is obtained by identifying the leading coefficient on each member of \eqref{equation-to-solve} using \eqref{Dx-xnSx-xn}.
Let $({\bf a}_n)_{n\geq 0}$ and $({\bf a} ^{[1]} _n)_{n\geq 0}$ be the dual basis associated to the sequences $(P_n)_{n\geq 0}$ and $(P ^{[1]} _n)_{n\geq 0}$, respectively. We claim that 
\begin{align}\label{kn-and-Sxan1}
{\bf S}_x{\bf a}^{[1]} _n=\alpha_n {\bf a}_n.
\end{align}
Indeed, by \eqref{equation-to-solve}, we have 
$$\left\langle {\bf S}_x {\bf a}^{[1]} _n,P_j  \right\rangle =\left\langle  {\bf a}^{[1]} _n,\mathrm{S}_xP_j  \right\rangle = k_j ^{-1}\gamma_{j+1} \left\langle {\bf a}^{[1]} _n,P^{[1]} _{j}  \right\rangle=\alpha_j \delta_{n,j}, \quad (j=0,1,\dots).$$ 
Hence
$$ {\bf S}_x {\bf a}_n ^{[1]} =\sum_{j=0} ^{+\infty} \left\langle {\bf S}_x {\bf a}^{[1]} _n,P_j  \right\rangle {\bf a}_j=\alpha_n {\bf a}_n.$$
We now apply the operator ${\bf D}_x$ to \eqref{kn-and-Sxan1}, using \eqref{DxnSx-u} (for $n=1$ and replacing ${\bf u}$ by ${\bf a}_n ^{[1]}$) and \eqref{basis-Dx-derivatives} (for $k=1$) to obtain
\begin{align}
-\alpha\alpha_n {\bf D}_x {\bf a}_n =(2\alpha ^2-1)\gamma_{n+1}{\bf S}_x {\bf a}_{n+1} +\gamma_{n+1}\texttt{U}_1 {\bf D}_x {\bf a}_{n+1}.\label{almost-done}
\end{align}
Moreover, from \eqref{def-fD_x-u} with $f$ and ${\bf u}$ replaced by $\texttt{U}_1$ and ${\bf a}_{n+1}$, respectively, and using the fact that $\mathrm{D}_x\texttt{U}_1=\alpha^2-1$ and $\mathrm{S}_x\texttt{U}_1=\alpha \texttt{U}_1$,
we obtain the following equation
\begin{align}\label{intermediate-r}
\texttt{U}_1 {\bf D}_x {\bf a}_{n+1}=\alpha {\bf D}_x (\texttt{U}_1 {\bf a}_{n+1})-(\alpha^2-1){\bf S}_x {\bf a}_{n+1}.
\end{align}
Putting \eqref{intermediate-r} inside \eqref{almost-done} yields
\begin{align*}
{\bf D}_x (\alpha_n {\bf a}_{n}+\gamma_{n+1}\texttt{U}_1{\bf a}_{n+1})=-\alpha \gamma_{n+1}{\bf S}_x {\bf a}_{n+1}.
\end{align*}
Finally, \eqref{x-classical-with-n} follows from the above equation taking into account \eqref{expression-an}, \eqref{TTRR_relation}, and \eqref{TTRR_coefficients}.
\end{proof}

The next results follows immediately by taking $n=0$ in \eqref{x-classical-with-n}.

\begin{corollary}\label{coro-sol}
Let $(P_n)_{n\geq 0}$ be a monic OPS with respect to ${\bf u}\in \mathcal{P}^*$. Assume that \eqref{equation-to-solve} holds. Then ${\bf u}$ is $x$-classical. Moreover, 
\begin{align}
{\bf D}_x(\phi {\bf u})={\bf S}_x (\psi {\bf u}), \label{true-x-GP-equation}
\end{align}
where
\begin{align*}
\psi(z)=z-B_0,\quad 
\phi(z)=\left\{
\begin{array}{lcl}
-(\alpha -\alpha^{-1})(z-\mathfrak{c}_3)(z-B_0)-\alpha^{-1}C_1, &  q\neq1,\\ [7pt]
-2\beta(z-B_0)-C_1, &  q =1.
\end{array}
\right.
\end{align*}
\end{corollary}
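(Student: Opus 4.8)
Since the preceding Remark asserts that the result is obtained by setting $n=0$ in \eqref{x-classical-with-n}, the plan is to carry out precisely this substitution and then simplify. First I would evaluate the relevant constants at $n=0$: from the defining formulas one has $\gamma_1=1$ and $\alpha_0=1$ in both lattice regimes (for $q\neq1$ because $\gamma_1=(q^{1/2}-q^{-1/2})/(q^{1/2}-q^{-1/2})$ and $2\alpha_0=q^0+q^0$, and for $q=1$ because $\gamma_1=1$ and $\alpha_0=1$ directly). Using the monic three-term recurrence \eqref{TTRR_relation} I would also record $P_0=1$ and $P_1(z)=z-B_0$, with $C_1$ the first recurrence coefficient from \eqref{TTRR_coefficients}.

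With these values, \eqref{x-classical-with-n} collapses to the single relation ${\bf D}_x\big((\texttt{U}_1\,(z-B_0)+C_1)\,{\bf u}\big)=-\alpha\,{\bf S}_x\big((z-B_0)\,{\bf u}\big)$. Since $\alpha=(q^{1/2}+q^{-1/2})/2>0$ for every admissible $q>0$, I may divide by $-\alpha$, which puts the relation into the Pearson form \eqref{true-x-GP-equation} and lets me read off $\psi(z)=z-B_0$, a polynomial of degree exactly one, together with $\phi(z)=-\alpha^{-1}\big(\texttt{U}_1(z)\,(z-B_0)+C_1\big)$.

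It then remains to insert the explicit expression for $\texttt{U}_1$ in each case. For $q\neq1$, where $\texttt{U}_1(z)=(\alpha^2-1)(z-\mathfrak{c}_3)$, I would use the identity $\alpha^{-1}(\alpha^2-1)=\alpha-\alpha^{-1}$ to obtain $\phi(z)=-(\alpha-\alpha^{-1})(z-\mathfrak{c}_3)(z-B_0)-\alpha^{-1}C_1$. For $q=1$, where $\texttt{U}_1=2\beta$ and $\alpha=1$, the same formula reduces to $\phi(z)=-2\beta(z-B_0)-C_1$; these are exactly the two stated expressions. Finally, since $\deg\phi\leq2$ and $\deg\psi=1$, and since ${\bf u}$ is regular (being the functional of an OPS), the hypotheses of the Definition of an $x$-classical functional are met, so ${\bf u}$ is $x$-classical.

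There is no substantive obstacle here: the entire analytic content sits in the Proposition, and what remains is purely bookkeeping. The only points demanding a little care are confirming $\gamma_1=\alpha_0=1$ uniformly across the two lattice regimes, checking that $\alpha=1$ occurs exactly when $q=1$ so that the $q\neq1$ and $q=1$ formulas for $\phi$ are mutually consistent with the single relation derived above, and noting $\alpha>0$ to justify dividing by $-\alpha$.
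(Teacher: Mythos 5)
Your proposal is correct and follows exactly the paper's route: the paper proves this corollary by simply taking $n=0$ in \eqref{x-classical-with-n}, and your computation (using $\gamma_1=\alpha_0=1$, $P_1(z)=z-B_0$, dividing by $-\alpha$, and substituting the explicit form of $\texttt{U}_1$ in each lattice regime) is precisely the bookkeeping the paper leaves implicit.
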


\begin{remark}\label{Rmk1}
According to Corollary \ref{coro-sol}, any OPS $(P_n)_{n\geq0}$ satisfying \eqref{equation-to-solve} is $x$-classical and, therefore, Theorem \ref{main-Thm1} can be applied to determine the recurrence coefficients, $B_n$ and $C_n$, appearing in \eqref{TTRR_relation} in term of $B_0$ and $C_1$, which may be regarded as the only possible free parameters. We will see in the next section, for each case where the lattice is fixed, that we need to take into account some initial conditions which will allow us to obtain completely all possible solutions. 
\end{remark}

\section{Main results}\label{S-main}

 We are now in position to state our main results.

\begin{theorem}\label{ThmMain1}
Consider the lattice $x(s)=\mathfrak{c}_1q^{-s} +\mathfrak{c}_2 q^s +\mathfrak{c}_3$ with $\mathfrak{c}_1\mathfrak{c}_2\neq 0$. 
Then, up to an affine transformation of the variable, the only monic OPS, $(P_n)_{n\geq 0}$, satisfying 
\begin{align}\label{equat-solving}
\mathrm{D}_xP_{n+1}(z)=\alpha_n^{-1}\gamma_{n+1}\mathrm{S}_xP_n(z),
\end{align}
are those of the Askey-Wilson polynomials
$$
P_n(z)=2^n(\mathfrak{c}_1\mathfrak{c}_2)^{n/2}Q_n\left(\frac{z-\mathfrak{c}_3}{2\sqrt{\mathfrak{c}_1\mathfrak{c}_2}};a,-a,iq^{-1/2}/a,-iq^{-1/2}/a\Big|q\right), 
$$
with $a\notin \left\lbrace \pm q^{(n-1)/2}, \pm iq^{-n/2}\,|\,n=0,1,\ldots\right\rbrace$.
\end{theorem}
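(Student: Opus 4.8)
The plan is to run the machinery built up in Sections~\ref{preliminaries}--\ref{S-main} in three stages: reduce \eqref{equat-solving} to a Pearson-type equation, squeeze the recurrence coefficients out of Theorem~\ref{main-Thm1}, and finally recognize the result as a one-parameter sub-family of Askey--Wilson polynomials. First I would invoke Corollary~\ref{coro-sol}: any monic OPS satisfying \eqref{equat-solving} is $x$-classical, its functional ${\bf u}$ obeying \eqref{true-x-GP-equation} with $\psi(z)=z-B_0$ and $\phi(z)=-(\alpha-\alpha^{-1})(z-\mathfrak{c}_3)(z-B_0)-\alpha^{-1}C_1$. The decisive structural feature here is that, \emph{whatever} $B_0$ and $C_1$ are, the leading coefficient of $\phi$ is the fixed number $a=-(\alpha-\alpha^{-1})$; this will later translate into a constraint on the product $a_1a_2a_3a_4$. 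Thus the problem is reduced to a two-parameter family indexed by $(B_0,C_1)$, the lattice data being fixed.

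Next I would feed this $\phi,\psi$ into Theorem~\ref{main-Thm1}. A short computation gives the clean simplifications $d_n=\alpha_{n-1}/\alpha$ and $e_n=-(B_0-\mathfrak{c}_3)\,\alpha_{n-1}/\alpha$, so that \eqref{Bn-Cn-Dx} yields $B_n=\mathfrak{c}_3+(B_0-\mathfrak{c}_3)r_n$ for an explicit factor $r_n$, together with an explicit $C_{n+1}$ depending only on $C_1$ and the lattice. The key point is that \eqref{equat-solving} must hold for \emph{every} $n$, whereas Corollary~\ref{coro-sol} used only $n=0$. The cases $n=0,1$ turn out to be identically consistent for any $B_0$, so I would impose \eqref{equat-solving} at $n=2$ (equivalently, exploit \eqref{x-classical-with-n} beyond $n=0$): comparing the coefficient of $z$ on both sides, and carrying the lattice-dependent quantities through, produces a \emph{linear} equation for $B_0$ whose unique solution for $q\neq1$ is $B_0=\mathfrak{c}_3$. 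Hence $e_n\equiv0$, $B_n\equiv\mathfrak{c}_3$ for all $n$, the OPS is symmetric, and $\phi$ collapses to $-(\alpha-\alpha^{-1})(z-\mathfrak{c}_3)^2-\alpha^{-1}C_1$, leaving $C_1$ as the sole free parameter.

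With symmetry in hand, I would pass to the normalized variable $w=(z-\mathfrak{c}_3)/(2\sqrt{\mathfrak{c}_1\mathfrak{c}_2})$, which sends the lattice to the standard Askey--Wilson one and accounts for the ``up to an affine transformation'' in the statement. It then remains to match the recurrence coefficients against those recalled in Section~2: the identity $B_n\equiv\mathfrak{c}_3$ forces the Askey--Wilson parameters to occur in opposite pairs $a_2=-a_1$, $a_4=-a_3$ (the even weight), while the fixed leading coefficient $a=-(\alpha-\alpha^{-1})$ forces $a_1a_2a_3a_4=-q^{-1}$; together these give $\{a_1,a_2,a_3,a_4\}=\{a,-a,iq^{-1/2}/a,-iq^{-1/2}/a\}$ with $a\leftrightarrow C_1$ the remaining degree of freedom. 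I would close the identification either by matching the explicit $C_{n+1}$ above against the Askey--Wilson $C_{n+1}$ of Section~2, or, more economically, by exhibiting these polynomials, verifying that they satisfy the \emph{same} equation \eqref{true-x-GP-equation}, and appealing to the uniqueness of a regular $x$-classical functional with prescribed $\phi,\psi$.

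Finally, the admissible values of $a$ come from the regularity conditions of the Remark following Theorem~\ref{main-Thm1}: $d_n\neq0$ holds automatically since $d_n=\alpha_{n-1}/\alpha$, whereas \eqref{Reg1Rmk} reduces to $4\mathfrak{c}_1\mathfrak{c}_2(\alpha^2-1)\gamma_n\alpha_{n-1}+C_1\neq0$, which after translation to $a$ is exactly the Askey--Wilson restriction $\prod(1-a_ia_jq^n)\neq0$ specialized to the present parameters, i.e. $a^2\neq q^{n-1}$ and $a^2\neq-q^{-n}$, giving $a\notin\{\pm q^{(n-1)/2},\pm iq^{-n/2}\}$. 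The hard part will be twofold: showing that $B_0=\mathfrak{c}_3$ is genuinely forced (since the low-order cases are vacuous, one must reach $n=2$ and track the lattice coefficients honestly), and then the bookkeeping of inverting the map $(\phi,\psi)\mapsto(a_1,a_2,a_3,a_4)$ so as to recognize the symmetric, product-constrained configuration; matching the full $C_{n+1}$ against the six-factor Askey--Wilson expression is the heaviest computation of the argument.
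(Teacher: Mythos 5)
Your plan reproduces the paper's own proof in all essentials: Corollary \ref{coro-sol} followed by Theorem \ref{main-Thm1} gives $B_n$ and $C_{n+1}$ in terms of $B_0,C_1$ (with exactly your values $d_n=\alpha_{n-1}/\alpha$, $e_n=-(B_0-\mathfrak{c}_3)\alpha_{n-1}/\alpha$); the constraints from \eqref{equat-solving} beyond $n=0$ force $B_0=\mathfrak{c}_3$; and the regularity conditions \eqref{Reg1Rmk}, after reparametrizing $C_1$ (the paper sets $C_1=\tfrac12(1-q^{-1})(1+r^{-1})(1-rq)\mathfrak{c}_1\mathfrak{c}_2$ and then $r=a^2$, one of the two finishing routes you sketch), yield precisely the stated Askey--Wilson family and exclusions on $a$. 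The only local variation is how $B_0=\mathfrak{c}_3$ is extracted: the paper compares its two expressions for $B_n$ asymptotically as $n\to\infty$ (multiplying by $q^{\mp n}$), whereas you compare at $n=2$ — which indeed works, since the two expressions first disagree exactly there, the discrepancy being $\frac{(q-1)^2(q^2+q+1)}{(q^2+1)(q^2-q+1)}\,(B_0-\mathfrak{c}_3)$, nonzero for $q\neq1$ unless $B_0=\mathfrak{c}_3$.
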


\begin{proof}
Let ${\bf u}\in \mathcal{P}^*$ be the regular functional with respect to which $(P_n)_{n\geq 0}$ is an OPS.  We claim that $B_n$, in \eqref{TTRR_relation}, is given by
\begin{align}\label{Bn-sol-q-quadratic}
B_n =\mathfrak{c}_3\quad (n=0,1,\ldots).
\end{align}  
Indeed, recall that $P_n(z)=z^n+f_n z^{n-1}+\cdots$, where $f_0=0$ and $f_n=-\sum_{j=0} ^{n-1}B_j$  $(n=1,2,\dots)$.
Using \eqref{Dx-xnSx-xn}, identifying the second coefficient of higher degree in both sides of \eqref{equat-solving}, yields
$$
\alpha_n (u_{n+1}+\gamma_nf_{n+1})=\gamma_{n+1}(\widehat{u}_n+\alpha_{n-1}f_n).
$$
Hence, by \eqref{Dx-xnSx-xn} and \eqref{form-4}, we can rewrite this equation as
$$
\frac{\alpha_n}{\gamma_{n+1}}f_{n+1}=\frac{\alpha_{n-1}}{\gamma_n}f_n +\frac{n\alpha-\alpha_n\gamma_n}{\gamma_{n}\gamma_{n+1}}\mathfrak{c}_3.
$$
By the telescopic sum method, we have
\begin{align*}
f_n=-\frac{\gamma_n}{\alpha_{n-1}}\left(B_0 +\mathfrak{c}_3\sum_{j=1} ^{n-1}\frac{\alpha_j\gamma_j- \alpha j}{\gamma_{j}\gamma_{j+1}}    \right)=-\frac{\gamma_n}{\alpha_{n-1}}(B_0-\mathfrak{c}_3)-n\mathfrak{c}_3.
\end{align*}
Therefore using \eqref{form-4} we obtain
\begin{align}\label{first-Bn-q}
B_n =f_n-f_{n+1}=\mathfrak{c}_3+\frac{\alpha}{\alpha_{n-1}\alpha_n}(B_0-\mathfrak{c}_3).
\end{align}
By Corollary \ref{coro-sol}, ${\bf u}$ satisfies \eqref{true-x-GP-equation} where $\phi(z)=-(\alpha-\alpha^{-1})(z-B_0)(z-\mathfrak{c}_3)-\alpha^{-1}C_1$ and $\psi(z)=z-B_0$. From \eqref{Bn-Cn-Dx}, we obtain
\begin{align}\label{second-Bn-q}
B_n=\mathfrak{c}_3 +(1+q)(B_0-\mathfrak{c}_3)q^{n-2}\frac{(q-1)(1-q^{2n-2})+(1+q)q^{n-1}}{(1+q^{2n-3})(1+q^{2n-1})}.
\end{align} 
If $0<q<1$, combining \eqref{first-Bn-q} and \eqref{second-Bn-q}, we obtain
\begin{align*}
2(1+q^{-1})(B_0-\mathfrak{c}_3)=\lim_{n\rightarrow +\infty} q^{-n}(B_n-\mathfrak{c}_3)=(1-q^{-2})(B_0-\mathfrak{c}_3),
\end{align*}
and \eqref{Bn-sol-q-quadratic} follows. Similarly, if $1<q<\infty$, then 
$$
2(1+q)(B_0-\mathfrak{c}_3)=\lim_{n\rightarrow +\infty}q^{n}(B_n-\mathfrak{c}_3)=(1-q^2)(B_0-\mathfrak{c}_3),
$$
and \eqref{Bn-sol-q-quadratic} also follows.
Therefore, the above expressions for $\phi$ and $\psi$ reduce to
$$\phi(z)=-(\alpha-\alpha^{-1})(z-\mathfrak{c}_3)^2-\alpha^{-1}C_1\;,\quad
\psi(z)=z-\mathfrak{c}_3.$$ 
It follows that $C_1$ can be regarded as the only free parameter (cf. Remark \ref{Rmk1}). 
From \eqref{Bn-Cn-Dx} we obtain $d_n= \alpha^{-1}\alpha_{n-1}$ and
\begin{align}
%d_n&= \alpha^{-1}\alpha_{n-1},\quad  \nonumber \\
\phi ^{[n]}\Big(\mathfrak{c}_3-\frac{e_n}{d_{2n}}\Big)&=\frac{1}{2\alpha}\Big(\mathfrak{c}_1\mathfrak{c}_2(1-q^{-1})(1-q^{n})(1+q^{-n+1})-2C_1  \Big). \label{Eqphinc3}
\end{align}
Choose a parameter $r$ as a solution of the quadratic equation
$$
(q-1)\mathfrak{c}_1\mathfrak{c}_2 Z^2 +2(C_1 +2(\alpha^2-1)\mathfrak{c}_1\mathfrak{c}_2)Z-(1-q^{-1})\mathfrak{c}_1\mathfrak{c}_2=0;
$$
that is
$$
r= \frac{C_1+2(\alpha^2-1)\mathfrak{c}_1\mathfrak{c}_2}{(1-q)\mathfrak{c}_1\mathfrak{c}_2} \pm \sqrt{q^{-1}+\Big( \frac{C_1+2(\alpha^2-1)\mathfrak{c}_1\mathfrak{c}_2}{(1-q)\mathfrak{c}_1\mathfrak{c}_2} \Big)^2}.
$$ 
%where $R=q^{-1}+\Big( \frac{C_1+2(\alpha^2-1)\mathfrak{c}_1\mathfrak{c}_2}{(1-q)\mathfrak{c}_1\mathfrak{c}_2} \Big)^2$. 
Instead of $C_1$ we may consider $r$ as the free parameter and to express $C_1$ in terms of $r$ as follows:
$$C_1=\frac12(1-q^{-1})(1+r^{-1})(1-rq)\mathfrak{c}_1\mathfrak{c}_2.$$ 
Therefore \eqref{Eqphinc3} can be rewriten as
\begin{align*}
\phi ^{[n]}\Big(\mathfrak{c}_3-\frac{e_n}{d_{2n}}\Big)&=\mathfrak{c}_1\mathfrak{c}_2\frac{1-q}{2\alpha}(1+rq^{n})(1-r^{-1}q^{n-1})q^{-n}. 
\end{align*}
It follows from the regularity conditions \eqref{Reg1Rmk} that the free parameter $r$ should satisfy the condition $r\notin \left\lbrace q^{n-1}, -q^{-n}\,|\,n=0,1,\ldots\right\rbrace$.
Moreover, \eqref{Bn-Cn-Dx} yields
\begin{align}
C_{n+1}=\mathfrak{c}_1\mathfrak{c}_2 \frac{(1+q^{n-2})(1-q^{n+1})(1+rq^n)(1-r^{-1}q^{n-1})}{(1+q^{2n-2})(1+q^{2n})}.
\end{align}
Thus 
$$P_n(x)=2^n(\mathfrak{c}_1\mathfrak{c}_2)^{n/2}Q_n\left(\frac{z-\mathfrak{c}_3}{2\sqrt{\mathfrak{c}_1\mathfrak{c}_2}};\sqrt{r},-\sqrt{r},i/\sqrt{rq},-i/\sqrt{rq} \,\Big|\,q\right),$$ 
and setting $r=a^2$ the theorem follows.
\end{proof}

\begin{theorem}\label{main-thm-quadratic}
Consider the lattice  $x(s)=4\beta s^2 +\mathfrak{c}_5 s +\mathfrak{c}_6$ with $(\beta,\mathfrak{c}_5)\neq (0,0)$.
Then there exist monic OPS, $(P_n)_{n\geq 0}$, satisfying 
\begin{align}\label{equat-solving2}
\mathrm{D}_xP_{n+1}(z)=(n+1)\mathrm{S}_xP_n(z)
\end{align}
if and only if $\beta=0$. In this case, up to an affine transformation of the variable, these polynomials are those of Meixner of the second kind
$$
P_n(z)=\left(\frac{i\mathfrak{c}_5}{2}\right)^nM_n\left(\frac{2i(B_0-z)}{\mathfrak{c}_5};0,-\frac{4C_1}{\mathfrak{c}_5 ^2}\right),
$$
with $B_0,C_1\in\mathbb{C}$ and  $4C_1/\mathfrak{c}_5^2\not\in\mathbb{N}$.
\end{theorem}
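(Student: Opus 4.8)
The plan is to follow the template of the proof of Theorem \ref{ThmMain1}, extracting the condition $\beta=0$ from a consistency requirement obtained by computing the recurrence coefficient $B_n$ in two independent ways. First I would invoke Corollary \ref{coro-sol}: any monic OPS $(P_n)_{n\geq 0}$ satisfying \eqref{equat-solving2} is $x$-classical, so that ${\bf u}$ obeys \eqref{true-x-GP-equation} with $\psi(z)=z-B_0$ and (the $q=1$ branch) $\phi(z)=-2\beta(z-B_0)-C_1$. This places Theorem \ref{main-Thm1} at my disposal with $a=0$, $b=-2\beta$, $d=1$, $e=-B_0$, giving $d_n=1$ and $e_n=2\beta n(n-1)-B_0$.

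The heart of the ``only if'' direction is the double computation of $B_n$. Writing $P_n(z)=z^n+f_nz^{n-1}+\cdots$ and matching the coefficient of $z^{n-1}$ on both sides of \eqref{equat-solving2} by means of \eqref{DxSx-xn-quadratic}, I get a first-order recurrence for $f_n$ that, after the substitution $f_n=ng_n$, telescopes to $f_n=-nB_0+\tfrac{2}{3}\beta n(n-1)(n-2)$; hence $B_n=f_n-f_{n+1}=B_0-2\beta n(n-1)$. Independently, inserting the data above into \eqref{Bn-Cn-quadratic} yields $B_n=B_0-8\beta n(n-1)$. Equating the two expressions forces $6\beta n(n-1)=0$ for every $n$, that is $\beta=0$; since $(\beta,\mathfrak{c}_5)\neq(0,0)$, this also guarantees $\mathfrak{c}_5\neq0$. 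Thus no OPS can solve \eqref{equat-solving2} when $\beta\neq0$.

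For $\beta=0$ the lattice is affine, $\phi(z)=-C_1$ and $\psi(z)=z-B_0$, and \eqref{Bn-Cn-quadratic} collapses to $B_n=B_0$ together with $C_{n+1}=(n+1)\big(C_1-\tfrac14 n\mathfrak{c}_5^2\big)$, while the regularity condition \eqref{Reg2Rmk} reduces to $\phi^{[n]}=-C_1+\tfrac14 n\mathfrak{c}_5^2\neq0$ for all $n\geq0$, i.e. $4C_1/\mathfrak{c}_5^2\notin\mathbb{N}$. I would then read off the monic recurrence coefficients of \eqref{Meixner2nd}, namely $\widetilde B_n=-b_1(2n+b_2)$ and $\widetilde C_{n+1}=(b_1^2+1)(n+1)(n+b_2)$, and impose the affine law $B_n=B_0+\lambda\widetilde B_n$, $C_{n+1}=\lambda^2\widetilde C_{n+1}$ coming from $P_n(z)=\lambda^nM_n\big((z-B_0)/\lambda;b_1,b_2\big)$. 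Matching term by term fixes $b_1=0$, $\lambda^2=-\mathfrak{c}_5^2/4$ and $b_2=-4C_1/\mathfrak{c}_5^2$, and the choice $\lambda=i\mathfrak{c}_5/2$ reproduces exactly the displayed formula, with the free parameters $B_0,C_1$.

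The remaining task, which I expect to be the main obstacle, is to close the converse: to confirm that this Meixner family genuinely satisfies \eqref{equat-solving2}, and not merely that it carries the correct recurrence coefficients. I would set $R_n:=\mathrm{D}_xP_{n+1}-(n+1)\mathrm{S}_xP_n$ and observe that the leading and subleading coefficients already cancel, so $\deg R_n\leq n-1$; it then suffices to prove $\langle{\bf u},R_nP_j\rangle=0$ for $0\leq j\leq n-1$, whence $R_n\equiv0$ by orthogonality. These pairings can be reduced, via the product rules \eqref{def-Dx-fg}--\eqref{def-Sx-fg} and the duality $\langle\mathbf{D}_x{\bf u},f\rangle=-\langle{\bf u},\mathrm{D}_xf\rangle$, $\langle\mathbf{S}_x{\bf u},f\rangle=\langle{\bf u},\mathrm{S}_xf\rangle$, to expressions governed by the Pearson equation \eqref{true-x-GP-equation}; the fact that $\phi\equiv-C_1$ is constant on the affine lattice should make the resulting integration-by-parts identities collapse cleanly and force each pairing to vanish.
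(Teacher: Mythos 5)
Your proposal is correct and, in all the steps the paper actually carries out, it is the paper's proof: the two independent computations of $B_n$ (coefficient matching in \eqref{equat-solving2} via \eqref{DxSx-xn-quadratic}, giving $B_0-2\beta n(n-1)$, versus Corollary \ref{coro-sol} combined with \eqref{Bn-Cn-quadratic}, giving $B_0-8\beta n(n-1)$) force $\beta=0$; then $d_n=1$, $\phi^{[n]}=-C_1+n\mathfrak{c}_5^2/4$, the regularity constraint $4C_1/\mathfrak{c}_5^2\notin\mathbb{N}$, and the identification with \eqref{Meixner2nd} by matching recurrence coefficients, all agree with the paper.

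Where you genuinely differ is the final step. You correctly observe that matching recurrence coefficients proves only the ``only if'' half together with uniqueness, and that the ``if'' half --- that the Meixner family really satisfies \eqref{equat-solving2} --- still needs an argument, since Corollary \ref{coro-sol} is a one-way implication. The paper compresses this into the single sentence ``the theorem follows from \eqref{Meixner2nd}'', so your addition is a supplement rather than a divergence, and a welcome one. Note, however, that your sketch of this step is a plan, not yet a proof: to evaluate $\langle{\bf u},R_nP_j\rangle$ you cannot apply \eqref{def-Dx-fg}--\eqref{def-Sx-fg} directly to the product $(\mathrm{D}_xP_{n+1})P_j$; you should first write $P_j=\mathrm{S}_xh_j$ (legitimate, since $\mathrm{S}_x$ is a degree-preserving bijection of $\mathcal{P}$ here, $\alpha_n=1$) so that the product rules and the Pearson relation \eqref{true-x-GP-equation} can be invoked, and then close the computation by induction; the fact that on this lattice $\texttt{U}_1\equiv0$ and $\texttt{U}_2$ is constant (so ${\bf D}_x$ and ${\bf S}_x$ essentially commute by \eqref{DxnSx-u}) is what makes this feasible. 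Alternatively, one can reverse the dual-basis argument of the Proposition: from \eqref{true-x-GP-equation} one recovers ${\bf S}_x{\bf a}_n^{[1]}=\alpha_n{\bf a}_n$, which is equivalent to \eqref{equation-to-solve}.
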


\begin{proof}
Let ${\bf u}\in \mathcal{P}^*$ be the regular functional with respect to which $(P_n)_{n\geq 0}$ is an OPS. Suppose that $(P_n)_{n\geq 0}$ satisfies \eqref{equat-solving2}. Then $B_n$, in \eqref{TTRR_relation}, is given by 
\begin{align}
B_n=B_0 -2\beta n(n-1)\;. \label{Bn-quadratic-1}
\end{align}
Indeed, as in the proof of Theorem \ref{ThmMain1}, from $P_n(z)=z^n +f_nz^{n-1}+\cdots$ and \eqref{equat-solving2}, and using \eqref{DxSx-xn-quadratic}, we obtain 
$$v_{n+1} +nf_{n+1}=(n+1)\widehat{v}_n +(n+1)f_n  \quad \quad (n=1,2,\ldots)\;.$$ 
This can be rewritten as
$$\frac{f_{n+1}}{n+1} =\frac{f_n}{n} +\frac{(n+1)\widehat{v}_n-v_{n+1}}{n(n+1)}.$$
Therefore, by the telescopic sum method, we obtain 
$$ f_n=-nB_0 +n\sum_{l=1} ^{n-1} \frac{(l+1)\widehat{v}_l-v_{l+1}}{l(l+1)}=-nB_0 +\frac{2}{3}\beta n(n-1)(n-2).$$
The expression of $B_n$, in \eqref{Bn-quadratic-1}, follows from the above equation and $B_n=f_n-f_{n+1}$. On the other hand, using Corollary \ref{coro-sol}, we see that ${\bf u}$ satisfies \eqref{true-x-GP-equation} with $\psi(z)=z-B_0$ and $\phi(z)=-2\beta(z-B_0)-C_1$. 
Hence \eqref{Bn-Cn-quadratic} yields $$B_n=B_0-8\beta n(n-1).$$ 
This agrees with the expression of $B_n$ given in \eqref{Bn-quadratic-1} if and only if $\beta=0$. Therefore, 
we compute $d_n=1$ and $\phi^{[n]}(z)=-C_1+n\mathfrak{c}_5 ^2/4$, and
using again \eqref{Bn-Cn-quadratic} we obtain 
\begin{align*}
B_n=B_0\;,\quad C_{n+1}=-\frac{1}{4}\mathfrak{c}_5^2 (n+1)\left(n-\frac{4C_1}{\mathfrak{c}_5 ^2}\right),
\end{align*}
$B_0$ and $C_1$ being free parameters subject to the regularity conditions  $\mathfrak{c}_5 ^2n-4C_1\neq0$ for each $n=0,1,\ldots$.
Finally, the theorem follows from \eqref{Meixner2nd}.
\end{proof}

\section*{Acknowledgements }
This work is supported by the Centre for Mathematics of the University of Coimbra-UID/MAT/00324/2019, funded by the Portuguese Government through FCT/MEC and co-funded by the European Regional Development Fund through the Partnership Agreement PT2020. 

{

\end{document}